\theoremstyle{plain}
\newtheorem{thm}{Theorem}[section]
\newtheorem{lem}[thm]{Lemma}
\theoremstyle{definition}
\newtheorem{rem}[thm]{Remark}
\numberwithin{equation}{section}
\newcommand{\A}{\mathcal A} 
\newcommand{\Z}{\mathbb{Z}}
\newcommand{\bk}{\mathbf{k}}
\DeclareMathOperator{\dep}{dep}
\DeclareMathOperator{\Li}{Li}
\def\={\;=\;} \def\+{\,+\,} \def\m{\,-\,} \def\Eq{\;\,\equiv\;\,}
\begin{document}
\title{Analogues of the Aoki-Ohno and Le-Murakami relations for finite multiple zeta values}
\author{Masanobu Kaneko, Kojiro Oyama, and Shingo Saito}
\date{\today}
\maketitle 

\begin{abstract} We establish finite analogues of the identities known as the Aoki-Ohno relation and the Le-Murakami 
relation in the theory of multiple zeta values. We use an explicit form of a generating series 
given by Aoki and Ohno.
\end{abstract}

\section{Introduction and the statement}
For an index set of positive integers $\bk=(k_1,\ldots, k_r)$ with $k_1>1$, the multiple zeta value $\zeta(\bk)$ and the multiple zeta-star value $\zeta^\star(\bk)$
are defined respectively by the nested series
\begin{align*}
\zeta(\bk)&\=\sum_{m_1>\cdots>m_r>0}\frac{1}{m_1^{k_1}\cdots m_r^{k_r}} \\
\intertext{and}
\zeta^\star(\bk)&=\sum_{m_1\ge \cdots\ge m_r\ge1} \frac{1}{m_1^{k_1}\cdots m_r^{k_r}}.  
\end{align*}
We refer to the sum $k_1+\cdots +k_r$, the length $r$, and the number of components $k_i$ with $k_i>1$ as the 
weight, depth, and height of the index $\bk$ respectively.

For given $k$ and $s$, let $I_0(k,s)$ be the set of indices $\bk=(k_1,\ldots, k_r)$ with $k_1>1$ of weight $k$ and height $s$. 
We naturally have $k\ge 2s$ and $s\ge1$; otherwise $I_0(k,s)$ is empty.

Aoki and Ohno proved in \cite{AO} the identity
\begin{equation}\label{AOorig} \sum_{\bk\in I_0(k,s)} \zeta^\star(\bk)\= 2\binom{k-1}{2s-1}(1-2^{1-k})\zeta(k). \end{equation}
On the other hand, for $\zeta(\bk)$, the following identity is known as the Le-Murakami relation (\cite{LM}): For even $k$, it holds
\[ \sum_{\bk\in I_0(k,s)} (-1)^{\dep(\bk)} \zeta(\bk)\=\frac{(-1)^{k/2}}{(k+1)!}\sum_{r=0}^{k/2-s}\binom{k+1}{2r}(2-2^{2r})B_{2r} \pi^{k},\] 
where $B_n$ denotes the Bernoulli number. By Euler, the right-hand side is a rational multiple of the Riemann zeta value $\zeta(k)$.  \\

In this short article, we establish their analogous identities for {\em finite multiple zeta values}.   

For an index set of positive integers $\bk=(k_1,\ldots, k_r)$, the finite multiple zeta value $\zeta_\A(\bk)$ and the finite
multiple zeta-star value $\zeta_\A^\star(\bk)$ are elements in the quotient ring $\A:=\left(\prod_p \Z/p\Z\right) / \left(\bigoplus_p \Z/p\Z\right)$ 
($p$ runs over all primes) represented respectively by
\[ \left(\sum_{p>m_1>\cdots>m_r>0}\frac{1}{m_1^{k_1}\cdots m_r^{k_r}}\bmod p\right)_p 
\quad\text{and}\quad  \left(\sum_{p>m_1\ge\cdots\ge m_r>0}\frac{1}{m_1^{k_1}\cdots m_r^{k_r}}\bmod p\right)_p. \]
Studies of finite multiple zeta(-star) values go back at least to Hoffman \cite{H} (its preprint was available around 2004) and
Zhao \cite{Zh}. But it was rather recent that Zagier proposed (in 2012 to the first-named author) considering them in the (characteristic 0) 
ring $\A$ (\cite{KZ}, see \cite{K, K2}). 
In $\A$, the naive analogue $\zeta_{\A}(k)$ of the Riemann zeta value $\zeta(k)$ is zero because $\sum_{n=1}^{p-1} 1/n^k$ is congruent to 0
modulo $p$ for all sufficiently large primes $p$.  However, the ``true" analogue of $\zeta(k)$ in $\A$ is considered to be
\[ Z(k):=\left(\frac{B_{p-k}}{k}\right)_p. \]
We note that this value is zero when $k$ is even because the odd-indexed Bernoulli numbers are 0 except $B_1$. 
It is still an open problem whether $Z(k)\ne 0$ for any odd $k\ge3$. 

Now we state our main theorem, where the role of $Z(k)$ as a finite analogue of $\zeta(k)$ is evident.

\begin{thm}  The following identities hold in $\A$: 
\begin{eqnarray}
\sum_{\bk\in I_0(k,s)} \zeta_\A^\star(\bk)&\=&2\binom{k-1}{2s-1}(1-2^{1-k})Z(k),\label{AO}\\
\sum_{\bk\in I_0(k,s)} (-1)^{\dep(\bk)}\zeta_\A(\bk)&\=&2\binom{k-1}{2s-1}(1\m2^{1-k})Z(k)\label{LM}.
\end{eqnarray}
\end{thm}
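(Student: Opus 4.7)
The strategy is to transport the generating-series proof of the classical Aoki-Ohno identity (\ref{AOorig}) to the finite setting. Fix an odd prime $p$ larger than the weight $k$, and for an index $\bk=(k_1,\ldots,k_r)$ set
\[ H_p^\star(\bk)=\sum_{p>m_1\ge\cdots\ge m_r\ge 1}\frac{1}{m_1^{k_1}\cdots m_r^{k_r}},\qquad H_p(\bk)=\sum_{p>m_1>\cdots>m_r\ge 1}\frac{1}{m_1^{k_1}\cdots m_r^{k_r}}. \]
Then $\zeta_\A^\star(\bk)$ and $\zeta_\A(\bk)$ are represented by $(H_p^\star(\bk))_p$ and $(H_p(\bk))_p$ modulo $p$, so it is enough to prove the congruences obtained from (\ref{AO}) and (\ref{LM}) by replacing $\zeta_\A^\star(\bk),\zeta_\A(\bk),Z(k)$ with $H_p^\star(\bk),H_p(\bk),B_{p-k}/k$ respectively.

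I would next introduce the two generating series
\[ \Phi_p^\star(x,y)=\sum_{s\ge1,\,k\ge2s}\Bigl(\sum_{\bk\in I_0(k,s)}H_p^\star(\bk)\Bigr)x^{k-2s}y^{2s-1} \]
and
\[ \Phi_p(x,y)=\sum_{s\ge1,\,k\ge2s}\Bigl(\sum_{\bk\in I_0(k,s)}(-1)^{\dep(\bk)}H_p(\bk)\Bigr)x^{k-2s}y^{2s-1}, \]
and put them in closed form using the combinatorial manipulation of Aoki-Ohno that underlies (\ref{AOorig}). In that argument, indices in $I_0(k,s)$ are encoded by choosing $2s-1$ separator positions among $k-1$ slots (accounting for the binomial factor $\binom{k-1}{2s-1}$), and the summation $\sum_{m_1\ge\cdots\ge m_r\ge 1}$ is collected into a product over the relevant summation variables. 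Since those manipulations are purely formal, they apply verbatim to the truncated sums $H_p^\star,H_p$, and the resulting closed forms for $\Phi_p^\star,\Phi_p$ involve finite products of the form $\prod_{m=1}^{p-1}(1-x/m)$ and $\prod_{m=1}^{p-1}(1+x/m)$ together with simple rational factors in $x,y$.

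To conclude, I would reduce everything modulo $p$ using two classical ingredients: first, the factorisation $\prod_{m=1}^{p-1}(t-m)\equiv t^{p-1}-1\pmod{p}$ together with Wilson's theorem, giving $\prod_{m=1}^{p-1}(1\pm x/m)\equiv 1\mp x^{p-1}\pmod{p}$; and second, the standard evaluation identifying the relevant truncated multiple harmonic sums modulo $p$ with $B_{p-k}/k$ up to an explicit rational scalar. Expanding the closed forms under these congruences and extracting the coefficient of $x^{k-2s}y^{2s-1}$ should produce the binomial coefficient $2\binom{k-1}{2s-1}$, the factor $1-2^{1-k}$ (arising from the symmetric combination of the two $\pm$ products), and the residual scalar $Z(k)=B_{p-k}/k\bmod p$. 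The main obstacle will be the bookkeeping in this last step: in particular, one must verify that the sign $(-1)^{\dep(\bk)}$ in the non-star case exactly compensates for the difference between $\prod_{m=1}^{p-1}(1-x/m)$ and $\prod_{m=1}^{p-1}(1+x/m)$, so that both $\Phi_p^\star$ and $\Phi_p$ reduce mod $p$ to the same right-hand side --- the phenomenon which makes the finite Le-Murakami identity (\ref{LM}) coincide with the finite Aoki-Ohno identity (\ref{AO}), in contrast to the classical case.
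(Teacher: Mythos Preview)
Your high-level plan for (\ref{AO}) --- transport the Aoki--Ohno generating series to the truncated sums and reduce modulo $p$ --- is exactly the paper's strategy, but the concrete mechanism you describe does not match what actually happens, and as written it would not go through.

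First, the Aoki--Ohno closed form is not a product $\prod_{m=1}^{p-1}(1\pm x/m)$. Writing $\Phi_0=\sum_n a_n t^n$, each $a_n$ is a sum over $l$ of terms $A_{n,l}(\pm z)/(x\pm z-l)$ where $A_{n,l}(z)$ is a ratio of Pochhammer symbols. The crucial step in the paper is to recognise $\sum_{n=0}^{p-l-1}(l)_n(z)_n/((2z+1)_n n!)$ as a terminating Gauss hypergeometric value $F(-p+l,z;2z+1;1)$ minus a single correction term, and then to apply the Chu--Vandermonde evaluation $F(-m,b;c;1)=(c-b)_m/(c)_m$. Only after this does one obtain factors such as $(z^{p-1}-1)/((2z)^{p-1}-1)$, which is where the Fermat-type reduction you mention enters. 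Second, the binomial $\binom{k-1}{2s-1}$ is not produced by a ``separator positions'' encoding of $I_0(k,s)$; it falls out at the very end from the binomial expansion of $1/(x\pm z-l)$ in powers of $(x\pm z)/l$. Third, the factor $1-2^{1-k}$ comes from the congruence $\sum_{l=1}^{p-1}(-1)^{l-1}/l^{k}\equiv 2(1-2^{1-k})B_{p-k}/k\pmod p$ for the \emph{alternating} harmonic sum that survives the computation, not from a symmetric combination of two products.

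For (\ref{LM}) the paper does something different from your parallel generating-series plan: it proves directly that $\sum_{\bk\in I_0(k,s)}\zeta_\A^\star(\bk)=(-1)^{k-1}\sum_{\bk\in I_0(k,s)}(-1)^{\dep(\bk)}\zeta_\A(\bk)$ by summing the antipode identity $\sum_{i=0}^r(-1)^i\zeta_\A(k_i,\ldots,k_1)\zeta_\A^\star(k_{i+1},\ldots,k_r)=0$ over $I_0(k,s)$ and using Hoffman's symmetric-sum theorem to kill the cross terms. Since $Z(k)=0$ for even $k$, this immediately transfers (\ref{AO}) to (\ref{LM}). Your proposed route via an Ohno--Zagier-type series is in principle viable (the paper even remarks on this), but the ``sign compensation between $\prod(1-x/m)$ and $\prod(1+x/m)$'' you invoke is not the actual mechanism; without the hypergeometric input above, that step would remain a gap.
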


We should note that the right-hand sides are exactly the same.
In the next section, we give a proof of the theorem.

\section{Proof}

Let $\Li_{\bk}^\star(t)$ be the ``non-strict" version of the multiple-polylogarithm:

\[ \Li_{\bk}^\star(t)\=\sum_{m_1\ge\cdots\ge m_r\ge1}\frac{t^{m_1}}{m_1^{k_1}\cdots{m_r}^{k_r}}.\]
Aoki and Ohno \cite{AO} computed the generating function
\[ \Phi_0\;:=\;\sum_{k,s\ge1}\left(\sum_{\bk\in I_0(k,s)}\Li_{\bk}^\star(t)\right)\,x^{k-2s}z^{2s-2},\]
and, in view of $\Li_{\bk}^\star(1)=\zeta^\star(\bk)$ (if $k_1>1$), evaluated it at $t=1$ to obtain the identity \eqref{AOorig}.  
For our purpose, the function $\Li_{\bk}^\star(t)$ is useful because the truncated sum
\[\sum_{p>m_1\ge\cdots\ge m_r\ge1}\frac{1}{m_1^{k_1}\cdots m_r^{k_r}} \]
used to define $\zeta_\A^\star(\bk)$ is the sum of the coefficients of $t^i$ in $\Li_{\bk}^\star(t)$ 
for $i=1,\ldots,p-1$.  In \S3 of \cite{AO}, they showed 
\[ \Phi_0=\sum_{n=1}^\infty a_n t^n, \]
where
\[ a_n\=\sum_{l=1}^n\left(\frac{A_{n,l}(z)}{x+z-l}+\frac{A_{n,l}(-z)}{x-z-l}\right)\]
and
\[ A_{n,l}(z)\=(-1)^l\binom{n-1}{l-1}\frac{(z-l+1)\cdots (z-1)z(z+1)\cdots(z+n-l-1)}
{(2z-l+1)\cdots (2z-1)2z(2z+1)\cdots(2z+n-l)}.\]
The problem is then to compute the coefficient of $x^{k-2s}z^{2s-2}$ in $\sum_{n=1}^{p-1} a_n$ modulo $p$.

We proceed as follows:
\begin{align*}
\sum_{n=1}^{p-1} a_n &= \sum_{n=1}^{p-1}\sum_{l=1}^{n}\left(\frac{A_{n,l}(z)}{x+z-l}+\frac{A_{n,l}(-z)}{x-z-l}\right)\\
&= \sum_{l=1}^{p-1}\sum_{n=l}^{p-1}\left(\frac{A_{n,l}(z)}{x+z-l}+\frac{A_{n,l}(-z)}{x-z-l}\right)\\
&= \sum_{l=1}^{p-1}\sum_{n=0}^{p-l-1}\left(\frac{A_{n+l,l}(z)}{x+z-l}+\frac{A_{n+l,l}(-z)}{x-z-l}\right).
\end{align*}
Writing $A_{n+l,l}(z)$ as 
\[ A_{n+l,l}(z)\=\frac{(-1)^l}{2z}\frac{(z-l+1)_{l-1}}{(2z-l+1)_{l-1}}\frac{(l)_n(z)_n}{(2z+1)_nn!}, \]
where $(a)_n=a(a+1)\cdots(a+n-1)$, we have
\[ \sum_{n=0}^{p-l-1} A_{n+l,l}(z)\=\frac{(-1)^l}{2z}\frac{(z-l+1)_{l-1}}{(2z-l+1)_{l-1}}\sum_{n=0}^{p-l-1}\frac{(l)_n(z)_n}{(2z+1)_nn!}. \]
We view the sum on the right as 
\[ \sum_{n=0}^{p-l-1}\frac{(l)_n(z)_n}{(2z+1)_nn!}\Eq F(-p+l,z;2z+1;1)-\frac{(l)_{p-l}(z)_{p-l}}{(2z+1)_{p-l}(p-l)!}\ \bmod p. \]
Here, $F(a,b;c;z)$ is the Gauss hypergeometric series
\[ F(a,b;c;z)\=\sum_{n=0}^\infty \frac{(a)_n(b)_n}{(c)_nn!} z^n, \]
where $(a)_n$ for $n\ge1$ is as before and $(a)_0=1$.
Note that if $a$ (or $b$) is a non-positive integer $-m$, then $F(a,b;c;z)$ is a polynomial in $z$ of degree at most $m$, and 
the renowned formula of Gauss 
\[ F(a,b;c;1)\=\frac{\Gamma(c)\Gamma(c-a-b)}{\Gamma(c-a)\Gamma(c-b)} \]
becomes 
\[ F(-m,b;c;1)\=\frac{(c-b)_m}{(c)_m}. \]
Hence 
\[ F(-p+l,z;2z+1;1)\=\frac{(z+1)_{p-l}}{(2z+1)_{p-l}}\equiv\frac{z^{p-1}-1}{(2z)^{p-1}-1}\frac{(2z-l+1)_{l-1}}{(z-l+1)_{l-1}}\ \bmod p. \]
We also compute 
\[ \frac{(l)_{p-l}(z)_{p-l}}{(2z+1)_{p-l}(p-l)!}\equiv (-1)^{l-1}\frac{z(z^{p-1}-1)}{(2z)^{p-1}-1}\frac{(2z-l+1)_{l-1}}{(z-l)_l}\  \bmod p. \]
Since we only need the coefficient of $z^{2s-2}$, we may work modulo higher power of $z$ and in particular 
we may replace $(z^{p-1}-1)/((2z)^{p-1}-1)$ by 1, assuming $p$ is large enough.
Hence we get
\begin{align*}  \sum_{n=1}^{p-1} a_n\equiv &\sum_{l=1}^{p-1}\left\{\frac{(-1)^l}{2z}\left(\frac1{x+z-l}-\frac1{x-z-l}\right) \right.\\
&\qquad \qquad+\left. \frac12\left(\frac1{(x+z-l)(z-l)}
-\frac1{(x-z-l)(z+l)}\right)\right\}\ \bmod p. 
\end{align*}
By the binomial expansion, we have
\begin{align*} 
\sum_{l=1}^{p-1} \frac{(-1)^l}{x+z-l}&\= \sum_{l=1}^{p-1}\frac{(-1)^{l-1}}{l}\sum_{m=0}^\infty \left(\frac{x+z}{l}\right)^m\\
&\=\sum_{l=1}^{p-1}\frac{(-1)^{l-1}}{l}\sum_{m=0}^\infty \frac1{l^m}\sum_{i=0}^m\binom{m}{i}x^{m-i}z^i\\
&\=\sum_{m\ge i\ge0}\binom{m}{i}\left(\sum_{l=1}^{p-1}\frac{(-1)^{l-1}}{l^{m+1}}\right) x^{m-i}z^i.
\end{align*}
From this we obtain
\[ \sum_{l=1}^{p-1}\frac{(-1)^l}{2z}\left(\frac1{x+z-l}-\frac1{x-z-l}\right) =
\sum_{m\ge 2i+1\ge0}\binom{m}{2i+1}\left(\sum_{l=1}^{p-1}\frac{(-1)^{l-1}}{l^{m+1}}\right) x^{m-2i-1}z^{2i}\]
and the coefficient of $x^{k-2s}z^{2s-2}$ in this is (by letting $i\to s-1$ and $m\to k-1$)
\[ \binom{k-1}{2s-1}\sum_{l=1}^{p-1}\frac{(-1)^{l-1}}{l^k}. \]
This is known to be congruent modulo $p$ to
\[ 2\binom{k-1}{2s-1}(1-2^{1-k})\frac{B_{p-k}}{k} \]
(see {\it e.g.} \cite[Theorem~8.2.7]{Zh2}). Concerning the other term
\begin{align*}
 &\sum_{l=1}^{p-1} \frac12\left(\frac1{(x+z-l)(z-l)}-\frac1{(x-z-l)(z+l)}\right) \\
 \=&\frac12\sum_{l=1}^{p-1} \left\{ \frac1x\left(\frac1{z-l}-\frac1{x+z-l}\right)- \frac1x\left(\frac1{z+l}+\frac1{x-z-l}\right)\right\},
 \end{align*}
every quantity that appears as coefficients in the expansion into power series in $x$ and $z$ is a multiple of the sum of the form
$\sum_{l=1}^{p-1} 1/l^m$, and are all congruent to 0 modulo $p$.  This concludes the proof of \eqref{AO}. 

We may prove \eqref{LM} in a similar manner by using the generating series of Ohno-Zagier \cite{OZ},
but we reduce \eqref{LM} to \eqref{AO} by showing that the left-hand sides of both formulas are equal up to sign.

Set $S_{k,s}:=\sum_{\bk\in I_0(k,s)} (-1)^{\dep(\bk)}\zeta_\A(\bk)$ and $S_{k,s}^\star := \sum_{\bk\in I_0(k,s)} \zeta_\A^\star(\bk)$.
\begin{lem}\label{zeta-zetastar}  $S_{k,s}^\star=(-1)^{k-1} S_{k,s}$. 
\end{lem}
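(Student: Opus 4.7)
My plan is to prove the lemma by combining the coarsening expansion of $\zeta^\star_\A$ with the reversal symmetry that holds in $\A$. First I would substitute the identity $\zeta^\star_\A(\bk) = \sum_{\bk' \preceq \bk} \zeta_\A(\bk')$ --- where $\bk' \preceq \bk$ means $\bk'$ is obtained from $\bk$ by combining some adjacent entries, and which holds term-by-term for the truncated sums --- into $S^\star_{k,s}$, and interchange the order of summation to obtain $S^\star_{k,s} = \sum_{\bk'} N(\bk') \zeta_\A(\bk')$, where the sum runs over admissible indices $\bk'$ of weight $k$ and $N(\bk')$ counts the elements of $I_0(k,s)$ of which $\bk'$ is a coarsening. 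Dually, the M\"obius-inverted relation $(-1)^{\dep(\bk)}\zeta_\A(\bk) = \sum_{\bk' \preceq \bk}(-1)^{\dep(\bk')}\zeta^\star_\A(\bk')$ yields $S_{k,s} = \sum_{\bk'} (-1)^{\dep(\bk')} N(\bk') \zeta^\star_\A(\bk')$. The lemma is therefore equivalent to the identity
\[
\sum_{\bk'} N(\bk')\bigl(\zeta_\A(\bk') + (-1)^{k+\dep(\bk')} \zeta^\star_\A(\bk')\bigr) \= 0 \quad \text{in } \A,
\]
and the multiplicity $N(\bk')$ has an explicit binomial form: for $\bk' = (k'_1,\ldots,k'_{r'})$,
\[
N(\bk') \= \sum_{\substack{t_1+\cdots+t_{r'}=s \\ t_1\geq 1}} \binom{k'_1-1}{2t_1-1} \prod_{j=2}^{r'} \binom{k'_j}{2t_j},
\]
using that compositions of $n$ with exactly $t$ parts $\geq 2$ are counted by $\binom{n}{2t}$.

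To close the argument I would invoke the reversal relation $\zeta_\A(k_1,\ldots,k_r) = (-1)^k\zeta_\A(k_r,\ldots,k_1)$ and its $\zeta^\star_\A$-analogue, both valid in $\A$ via the substitution $m_i \mapsto p - m_i$ modulo $p$ in the truncated sums. Under reversal of $\bk'$ the bracket $\zeta_\A(\bk') + (-1)^{k+\dep(\bk')}\zeta^\star_\A(\bk')$ transforms by the factor $(-1)^k$, and combined with the behaviour of $N$ under reversal this should collapse the sum.

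The main obstacle is that $I_0(k,s)$ is not closed under reversal --- the condition ``first entry exceeds $1$'' gets exchanged with ``last entry exceeds $1$'' --- so $N$ is not reversal-symmetric. The key remaining step will therefore be either a direct combinatorial identity matching the binomial sums for $N$ at $\bk'$ and at its reversal, or an auxiliary stuffle-type identity in $\A$ (for instance, one coming from the vanishing $\zeta_\A(1) = 0$) that absorbs the asymmetric contribution and produces the $(-1)^{k-1}$ sign.
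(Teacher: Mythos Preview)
Your proposal is not a proof but a plan, and the plan has a genuine gap at exactly the point you flag. You reduce the lemma to
\[
\sum_{\bk'} N(\bk')\bigl(\zeta_\A(\bk') + (-1)^{k+\dep(\bk')}\zeta^\star_\A(\bk')\bigr)=0,
\]
observe that the bracket picks up $(-1)^k$ under reversal, and hope that ``the behaviour of $N$ under reversal'' finishes things. But two obstacles remain. First, $N$ is genuinely not reversal-invariant (your own binomial formula shows the first factor is $\binom{k'_1-1}{2t_1-1}$ while the others are $\binom{k'_j}{2t_j}$), and you give no combinatorial identity repairing this. Second, even if $N$ \emph{were} reversal-invariant, the change of variables would only yield $\Sigma=(-1)^k\Sigma$, which forces $\Sigma=0$ for odd $k$ but says nothing for even $k$; the lemma, however, is asserted for all $k$. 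So the reversal strategy, as stated, cannot close the argument, and the ``auxiliary stuffle-type identity'' you allude to is doing all the real work and is left unspecified.

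The paper's proof avoids the coarsening expansion entirely. It uses the antipode-type identity
\[
\sum_{i=0}^r (-1)^i \zeta_\A(k_i,\ldots,k_1)\,\zeta^\star_\A(k_{i+1},\ldots,k_r)=0,
\]
sums it over $\bk\in I_0(k,s)$, and separates the $i=0$ and $i=r$ terms, which give $S^\star_{k,s}$ and $(-1)^kS_{k,s}$ (the latter via the reversal relation). The cross terms with $0<i<r$ factor as a product in which the $\zeta^\star_\A$-piece is a height-sum over the \emph{unrestricted} index set $I(k'',s'')$; because that set is reversal-symmetric, Hoffman's symmetric-sum theorem together with $\zeta_\A(m)=0$ forces each such factor to vanish. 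This kills the middle terms uniformly in $k$, with no parity split and no combinatorics on $N$. If you want to salvage your approach, the missing ingredient is precisely a vanishing statement of this kind for the asymmetric part of your sum; but at that point you are essentially reinventing the paper's argument.
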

\begin{proof}
We use the well-known identity (see for instance \cite[Corollary~3.16]{SS}) 
\begin{equation} \label{apid} \sum_{i=0}^r (-1)^i \zeta_\A(k_i,\ldots,k_1)\zeta_\A^\star(k_{i+1},\ldots,k_r)\=0. 
\end{equation}
Taking the sum of this over all $\bk\in I_0(k,s)$ and separating the terms corresponding to $i=0$ and $i=r$, we
obtain 
\[ S_{k,s}^\star+\sum_{k'+k''=k\atop s'+s''=s} \left(\sum_{\bk'\in I_0(k',s')} (-1)^{\dep(\bk')}\zeta_\A(\overleftarrow{\bk'})\right)
\left(\sum_{\bk''\in I(k'',s'')} \zeta_\A^\star(\bk'')\right)+(-1)^k S_{k,s}\=0. \]
Here, $\overleftarrow{\bk'}$ denotes the reversal of $\bk'$, and the set $I(k'',s'')$ consists of all indices (no restriction on the
first component) of weight $k''$ and height $s''$. 
We have used $\zeta_\A(\overleftarrow{\bk})=(-1)^k\zeta_\A(\bk)$ in computing the last term ($i=r$). Since the second
sum in the middle is symmetric and hence 0 (by using Hoffman \cite[Theorem~4.4]{H} and $\zeta_\A(k)=0$ for all $k\ge1$), we prove the lemma. 
\end{proof}
Since $Z(k)=0$ if $k$ is even, we see from Lemma \ref{zeta-zetastar} that the formula for $S_{k,s}$ is the same as that for $S_{k,s}^\star$.  
This concludes the proof of our theorem.

\begin{rem}  K.~Yaeo \cite{Y} proved the lemma in the case $s=1$ and T.~Murakami (unpublished) in general for all odd $k$. 
\end{rem}

\section{Acknowledgements}
The authors would like to thank Shin-ichiro Seki for his valuable comments on an earlier version of the paper.
This work was supported by JSPS KAKENHI Grant Numbers JP16H06336  and JP18K18712.


\begin{thebibliography}{9}

\bibitem{AO}  T.~Aoki and Y.~Ohno, Sum relations for multiple zeta values and connection formulas for the Gauss
hypergeometric functions, \textit{Publ. RIMS, Kyoto Univ.}, \textbf{41} (2005), 329--337.

\bibitem{H} M.~Hoffman, Quasi-symmetric functions and mod $p$ multiple harmonic sums, \textit{Kyushu J. Math.}, \textbf{69} (2015), 345--366.

\bibitem{K} M.~Kaneko, Finite multiple zeta values (in Japanese), \textit{RIMS K\^oky\^uroku Bessatsu},  \textbf{B68} (2017), 175--190.

\bibitem{K2} M.~Kaneko, An introduction to classical and finite multiple zeta values, to appear in Publications Mat\'{e}matiques de Besan\c{c}on.

\bibitem{KZ} M.~Kaneko and D.~Zagier, Finite multiple zeta values, \textit{in preparation}.

\bibitem{LM} T.~Q.~T.~Le and J.~Murakami, Kontsevich's integral for the Homfly polynomial and relation betwee values of 
multiple zeta functions, \textit{Topology Appl.}, \textbf{62} (1995), 193--206.

\bibitem{OZ} Y.~Ohno and D.~Zagier, Multiple zeta values of fixed weight, depth, and height, \textit{Ingag. Math.}, \textbf{12} (2001), 483--487.

\bibitem{SS} K.~Sakugawa and S-I.~Seki, On functional equations of finite multiple polylogarithms, \textit{J. Algebra}, \textbf{469} (2017), 323--357.

\bibitem{Y} K.~Yaeo, Consideration on relations among finite multiple zeta values (in Japanese), Master's thesis, Tohoku University, 2017.
 
\bibitem{Zh} J.~Zhao, Wolstenholme type theorem for multiple harmonic sums \textit{Int. J. Number Theory}, \textbf{4-1} (2008), 73--106.

\bibitem{Zh2} J.~Zhao, \textit{Multiple zeta functions, multiple polylogarithms and their special values}, Series on Number Theory and Its Applications,
\textbf{12}, World Sciendific, (2016). 

\end{thebibliography}
\end{document}